\providecommand{\U}[1]{\protect\rule{.1in}{.1in}}
\newtheorem{theorem}{Theorem}
\newtheorem{lemma}[theorem]{Lemma}
\newtheorem{proposition}[theorem]{Proposition}
\newenvironment{proof}[1][Proof]{\noindent\textbf{#1.} }{\ \rule{0.5em}{0.5em}}
\begin{document}

\title{A lower bound of the energy functional of a class of vector fields and a
characterization of the sphere}
\author{Giovanni Nunes
\and Jaime Ripoll}
\date{}
\maketitle

\begin{abstract}
Let $M$ be a compact, orientable, $n-$dimensional Riemannian manifold,
$n\geq2,$ and let $F$ be the energy functional acting on the space $\Xi(M)$ of
$C^{\infty}$ vector fiels of $M$,%
\[
F(X):=\frac{\int_{M}|\left\Vert \nabla X\right\Vert ^{2}dM}{\int_{M}\left\Vert
|X\right\Vert ^{2}dM},\text{ }X\in\Xi(M)\backslash\{0\}.
\]

Let $G\subset\operatorname*{Iso}(M)$ be a compact Lie subgroup of the isometry
group of $M$ acting with cohomogeneity $1$ on $M.$ Assume that any isotropy
subgroup of $G$ is non trivial and acts with no fixed points on the tangent
spaces of $M$, except at the null vectors. We prove in this note that under
these hypothesis, if the Ricci curvature $\operatorname*{Ric}\nolimits_{M}$ of
$M$ has the lower bound $\operatorname*{Ric}\nolimits_{M}\geq\left(
n-1\right)  k^{2},$ then $F(X)\geq k^{2},$ for any $G-$invariant vector field
$X\in\Xi(M)\backslash\{0\}$, and the equality occurs if and only if $M$ is
isometric to the $n-$dimensional sphere $\mathbb{S}_{k}^{n}$ of constant
sectional curvature $k^{2}$. In this case $X$ is an infimum of $F$ on
$\Xi(\mathbb{S}_{k}^{n}).$

\end{abstract}

\section{Introduction}

\qquad Given a compact, orientable, $n-$dimensional Riemannian manifold $M,$
$n\geq2,$ the energy functional $F$ of vector fields of $M$ is defined by%
\[
F(X):=\frac{\int_{M}\left\Vert \nabla X\right\Vert ^{2}dM}{\int_{M}\left\Vert
X\right\Vert ^{2}dM},\text{ }X\in\Xi(M)\backslash\{0\},
\]
where $\nabla$ is the Riemannian connection of $M$ and $\Xi(M)$ the space of
$C^{\infty}$ vector fields of $M.$ It is well known that the critical values
of $F$ are the eingenvalues of the so called rough Laplacian
$-\operatorname*{div}\nabla$ of $M$ and form a discrete sequence $0\leq
\lambda_{0}<\lambda_{1}<...\rightarrow\infty.$

In \cite{NR} it is proved that if $M$ is the Euclidean sphere $\mathbb{S}%
_{k}^{n}$ of constant sectional curvature $k^{2},$ $n\geq2,$ then the infimum
of $F$ is assumed by a vector field $X,$ orthogonal to the orbits of a compact
Lie subgroup of the isometry group of $\mathbb{S}_{k}^{n},$ acting with
cohomogeneity $1$ in $\mathbb{S}_{k}^{n}$ and having a fixed point (the
orthogonal subgroup $\operatorname*{O}(n-1)$ of $\operatorname*{O}(n)$ fixing
a point of $\mathbb{S}_{k}^{n}).$ Moreover, $X$ is $G-$invariant. We
investigate here a possible extension of this result to a Riemannian manifold
and arrived to a theorem which gives a characterization of the spheres as the
only minimizers for $F$ on a certain space of vector fields:

\begin{theorem}
\label{main}Let $M$ be a compact, orientable, $n-$dimensional Riemannian
manifold, $n\geq2,$ and $G\subset\operatorname*{Iso}(M)$ a compact Lie
subgroup of the isometry group of $M$ acting with cohomogeneity $1$ on $M.$
Assume that any isotropy subgroup of $G$ is non trivial and acts with no fixed
points on the tangent spaces of $M$, except at the null vectors. If the Ricci
curvature $\operatorname*{Ric}\nolimits_{M}$ of $M$ has the lower bound
$\operatorname*{Ric}\nolimits_{M}\geq\left(  n-1\right)  k^{2},$ then
$F(X)\geq k^{2},$ for any $G-$invariant vector field $X\in\Xi(M)$, and the
equality occurs if and only if $M$ is isometric to the $n-$dimensional sphere
$\mathbb{S}_{k}^{n}$ of constant sectional curvature $k^{2}$. In this case $X$
is an infimum of $F$ on $\Xi(\mathbb{S}_{k}^{n}).$
\end{theorem}

We observe that rank $1$ compact symmetric spaces admit a compact Lie group of
isometries satisfying the conditions of Theorem \ref{main}. More generally,
any $G-$ symmetric compact Riemannian manifold, where $G$ is the isotropy
subgroup of a compact rank $1$ symmetric space\footnote{The definition of a
$G-$ invariant Riemannian manifold is a natural and direct generalization of a
rotationally symmetric manifold as defined in (\cite{C}, definition 3.1)} ($G$
is isomorphic to one of the groups: $\operatorname*{O}(n),$ $\operatorname*{U}%
(n)\times\operatorname*{U}(1),$ $\operatorname*{Sp}(n)\times\operatorname*{Sp}%
(1),$ $\operatorname*{Spin}(9)).$ In particular, rotationally symmetric
compact Riemannian manifolds ($G$ is isomorphic to $\operatorname*{O}(n)$).

One should also mention other non trivial examples of compact cohomogeneity
one actions. They are given by certain submanifolds of $\mathbb{R}^{n}$ using
a construction introduced in \cite{HL}, as follows: Let $G$ be a compact Lie
subgroup of the isometry group of $\mathbb{R}^{n},$ $n\geq3,$ acting with
cohomogeneity $k\geq2,$ that is, the principal orbits of $G$ are submanifolds
of $\mathbb{R}^{n}$ with codimension $k.$ It is known that there is an open
dense subset $\left(  \mathbb{R}^{n}/G\right)  ^{\ast}$ of the quotient space
$\mathbb{R}^{n}/G,$ with the quotient topology, which is a smooth manifold of
dimension $k$ (see the Theorem of Section 2 of \cite{HL}). If $\gamma$ is a
closed curve in $\left(  \mathbb{R}^{n}/G\right)  ^{\ast}$ and $\pi
:\mathbb{R}^{n}\rightarrow\mathbb{R}^{n}/G$ the quotient projection, then
$M:=\pi^{-1}\left(  \gamma\right)  $ is a compact $G-$invariant manifold of
dimension $n-k+1.$ Clearly, $G$ acts with cohomogeneity $1$ in $M.$ The
compact subgroups of $\operatorname*{Iso}\left(  \mathbb{R}^{n}\right)  $
acting with cohomogeneity $2$ and $3$ are classified in Theorems 5, 6 and 7 of
\cite{HL} and explicit examples can be constructed using the representations
of these groups given in these theorems.

\section{Preliminary and prior results}

\qquad In this section we quote some Theorems of \cite{NR} and prove other
results which we will be used in the proof of Theorem \ref{main}.

\begin{theorem}
\label{inv} Let $M$ be a compact $n-$dimensional Riemannian manifold,
$n\geq2,$ and $G\subset\operatorname*{Iso}(M)$ a compact Lie subgroup of the
isometry group of $M$. Then the set $S_{G}$ of\ critical points of $F$
restricted to the subspace of $G-$invariant vector fields of $\Xi\left(
M\right)  $ is contained in the set of critical points of $F$ on $\Xi\left(
M\right)  $.
\end{theorem}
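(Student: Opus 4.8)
The plan is to recognize Theorem~\ref{inv} as an instance of Palais' principle of symmetric criticality, and to prove it directly by showing that the $L^{2}$-gradient of $F$ at a $G$-invariant vector field is again $G$-invariant.

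First I would record the first variation of $F$. For $X\in\Xi(M)\setminus\{0\}$ and $Y\in\Xi(M)$, differentiating $t\mapsto F(X+tY)$ at $t=0$ and integrating by parts gives
\[
dF_{X}(Y)=\frac{2}{\int_{M}\Vert X\Vert^{2}\,dM}\int_{M}\big\langle-\operatorname{div}\nabla X-F(X)\,X,\ Y\big\rangle\,dM .
\]
Setting $Z_{X}:=-\operatorname{div}\nabla X-F(X)\,X\in\Xi(M)$, we see that $X$ is a critical point of $F$ on $\Xi(M)$ if and only if $Z_{X}=0$, whereas $X$ is a critical point of $F$ restricted to the subspace $\Xi(M)^{G}$ of $G$-invariant vector fields if and only if $Z_{X}$ is $L^{2}$-orthogonal to every element of $\Xi(M)^{G}$.

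The key step is then to observe that if $X\in\Xi(M)^{G}$, then $Z_{X}\in\Xi(M)^{G}$. This is a consequence of the naturality of the Levi-Civita connection under isometries: for each $g\in G$ the pushforward $g_{\ast}$ satisfies $g_{\ast}(\nabla_{V}W)=\nabla_{g_{\ast}V}(g_{\ast}W)$, hence $g_{\ast}$ commutes with the rough Laplacian $\operatorname{div}\nabla$; since $g_{\ast}X=X$ for all $g\in G$ and $F(X)$ is a scalar, it follows that $g_{\ast}Z_{X}=Z_{X}$ for all $g\in G$, so $Z_{X}$ is $G$-invariant (and it is $C^{\infty}$, being built from $X$ by differential operators).

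Finally I would combine the two facts: if $X\in S_{G}$, then $Z_{X}\in\Xi(M)^{G}$ and $Z_{X}$ is $L^{2}$-orthogonal to all of $\Xi(M)^{G}$; pairing $Z_{X}$ with itself yields $\int_{M}\Vert Z_{X}\Vert^{2}\,dM=0$, hence $Z_{X}=0$, and therefore $X$ is a critical point of $F$ on $\Xi(M)$. I expect the only delicate point to be the justification that $g_{\ast}$ commutes with $\operatorname{div}\nabla$ (equivalently, that the rough Laplacian is $G$-equivariant), which is exactly where the hypothesis $G\subset\operatorname{Iso}(M)$ enters; the remainder is the routine integration-by-parts computation of the gradient of $F$ and a one-line orthogonality argument. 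As an alternative to exhibiting $Z_{X}$ explicitly, one can argue by averaging an arbitrary $Y\in\Xi(M)$ over $G$ against normalized Haar measure and using the $G$-invariance of $F$ together with the linearity of $dF_{X}$ in $Y$; the two arguments are equivalent.
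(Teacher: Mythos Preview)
Your argument is correct. Note, however, that the paper does not actually prove Theorem~\ref{inv}: it merely quotes the result from \cite{NR} (Theorem~3, Section~3), so there is no in-paper proof to compare against. What you have written is the standard Palais symmetric-criticality argument specialized to the Rayleigh quotient for the rough Laplacian: the first-variation formula is correct, the $G$-equivariance of $-\operatorname{div}\nabla$ (naturality of the Levi--Civita connection under isometries) gives $Z_{X}\in\Xi(M)^{G}$ whenever $X\in\Xi(M)^{G}$, and pairing $Z_{X}$ with itself forces $Z_{X}=0$. There is no gap; your main line of argument in fact does not even use the compactness of $G$ (that hypothesis is only relevant for the alternative Haar-averaging approach you mention at the end).
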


\begin{theorem}
\label{inf}Let $\mathbb{S}_{k}\subset\mathbb{R}^{n+1},$ $k>0,$ $n\geq2,$ be
the $n-$dimensional sphere of constant sectional curvature $k^{2}.$ Then the
infimum of $F$ on $\Xi\left(  \mathbb{S}_{k}\right)  $ is $k^{2}$ and is
assumed by the orthogonal projection on $T\mathbb{S}_{k}$ of a constant
nonzero vector field of $\mathbb{R}^{n+1}$.
\end{theorem}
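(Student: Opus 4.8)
The plan is to reduce the assertion to an eigenvalue computation for the rough Laplacian $\nabla^{\ast}\nabla=-\operatorname{div}\nabla$ and then carry it out explicitly on $\mathbb{S}_{k}$. Integration by parts gives $\int_{\mathbb{S}_{k}}\left\Vert \nabla X\right\Vert ^{2}=\int_{\mathbb{S}_{k}}\langle\nabla^{\ast}\nabla X,X\rangle$, so $F$ is the Rayleigh quotient of the nonnegative elliptic operator $\nabla^{\ast}\nabla$ on $\Xi(\mathbb{S}_{k})$; by the discreteness of its spectrum (recalled in the introduction) the infimum of $F$ is the smallest eigenvalue $\lambda_{0}$ and is attained by a $\lambda_{0}$-eigenfield. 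Thus it suffices to prove $\lambda_{0}=k^{2}$ and that the $\lambda_{0}$-eigenfields are exactly the orthogonal projections onto $T\mathbb{S}_{k}$ of the constant fields of $\mathbb{R}^{n+1}$.

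For the upper bound, fix $a\in\mathbb{R}^{n+1}\setminus\{0\}$, write $N=p/r$ ($r=1/k$) for the outer unit normal of $\mathbb{S}_{k}=\mathbb{S}^{n}(1/k)$, and put $X:=a^{\top}=a-\langle a,N\rangle N$. Using $\bar{\nabla}_{Y}N=kY$ (the shape operator of the round sphere), a one-line computation with the flat connection $\bar{\nabla}$ of $\mathbb{R}^{n+1}$ gives $\bar{\nabla}_{Y}X=-k\langle a,N\rangle Y-k\langle X,Y\rangle N$, hence $\nabla_{Y}X=-k\langle a,N\rangle Y$ for every tangent $Y$; equivalently $X=\nabla^{\mathbb{S}_{k}}\phi$ with $\phi=\langle a,\cdot\rangle|_{\mathbb{S}_{k}}$, and tracing the identity shows $\operatorname{div}X=-nk\langle a,N\rangle=-nk^{2}\phi$, i.e. $\phi$ is a first eigenfunction of the scalar Laplacian with eigenvalue $nk^{2}$. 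Consequently $X^{\flat}=d\phi$ is an eigenform of the Hodge Laplacian $\Delta_{H}$ with eigenvalue $nk^{2}$, and the Weitzenböck formula on $1$-forms, which on $\mathbb{S}_{k}$ reads
\[
\nabla^{\ast}\nabla=\Delta_{H}-(n-1)k^{2}\,\mathrm{Id}
\]
since $\operatorname{Ric}_{\mathbb{S}_{k}}=(n-1)k^{2}\,\mathrm{Id}$, yields $\nabla^{\ast}\nabla X=(nk^{2}-(n-1)k^{2})X=k^{2}X$. Therefore $F(X)=k^{2}$ and $\lambda_{0}\le k^{2}$.

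For the lower bound I would again use the displayed Weitzenböck identity: it shows $\lambda_{0}=\mu_{0}-(n-1)k^{2}$, where $\mu_{0}$ is the bottom of the spectrum of $\Delta_{H}$ on $1$-forms of $\mathbb{S}_{k}$. Since $H^{1}(\mathbb{S}^{n})=0$ for $n\ge2$, Hodge theory splits every $1$-form into an exact and a co-exact part, each still an eigenform when the original one is. On an exact form $df$ the $\Delta_{H}$-eigenvalue equals the eigenvalue of $f$ for the scalar Laplacian, which is $\ge nk^{2}$ by Lichnerowicz's estimate (using $\operatorname{Ric}\ge(n-1)k^{2}$), with the value $nk^{2}$ attained by restrictions of linear functions; on a co-exact $1$-form the $\Delta_{H}$-eigenvalue is $2(n-1)k^{2}\ge nk^{2}$, which is the classically known bottom of the co-closed part of the $1$-form spectrum of the round $n$-sphere. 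Hence $\mu_{0}=nk^{2}$ and $\lambda_{0}=k^{2}$. Moreover a $\lambda_{0}$-eigenfield $X$ corresponds under the musical isomorphism to a $\mu_{0}$-eigenform of $\Delta_{H}$, which by the above must be exact, $X^{\flat}=df$ with $f$ a first scalar eigenfunction; since these are precisely the $f=\langle a,\cdot\rangle|_{\mathbb{S}_{k}}$, we get $X=\nabla^{\mathbb{S}_{k}}f=a^{\top}$, as claimed.

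The step I expect to be the main obstacle is the lower bound, and specifically the co-closed $1$-forms: the plain Bochner formula gives only $\ge(n-1)k^{2}$ for their $\Delta_{H}$-eigenvalue, which is short of the $nk^{2}$ needed, so one must either invoke the explicitly known spectrum of the round sphere or run a sharper Weitzenböck-type argument using the full curvature tensor of $\mathbb{S}_{k}$ (not merely its Ricci curvature) to show that co-closed fields cannot undercut the value $k^{2}$ realized by the conformal fields $a^{\top}$.
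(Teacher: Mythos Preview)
The paper does not actually prove this theorem; it quotes it from \cite{NR} (Theorem~1, Section~2) and uses it as an input. So there is no in-paper argument to compare against, and I can only comment on the soundness of your proposal.

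Your upper bound is clean and correct: the computation $\nabla_{Y}X=-k\langle a,N\rangle Y$ and the Weitzenb\"ock identity $\nabla^{\ast}\nabla=\Delta_{H}-(n-1)k^{2}\operatorname{Id}$ immediately give $\nabla^{\ast}\nabla X=k^{2}X$.

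Your lower bound is correct in outline, and you have correctly isolated the only nontrivial step: on the co-closed part of $\Omega^{1}(\mathbb{S}_{k})$ the plain Bochner inequality yields only $\mu\geq(n-1)k^{2}$, which is not enough. Your claim that the bottom of the co-closed $1$-form spectrum is $2(n-1)k^{2}$ is true (it is attained by the Killing fields, for which $\nabla^{\ast}\nabla K=\operatorname{Ric}(K)=(n-1)k^{2}K$ and hence $\Delta_{H}K=2(n-1)k^{2}K$), but establishing that nothing lies below this value does require either the explicit Gallot--Meyer spectrum or an argument using the full curvature operator of the sphere, exactly as you say. With that input, $\mu_{0}=\min\bigl(nk^{2},\,2(n-1)k^{2}\bigr)=nk^{2}$ for $n\geq2$, and $\lambda_{0}=k^{2}$ follows.

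One small over-claim: your final sentence asserts that every $\lambda_{0}$-eigenfield is of the form $a^{\top}$. This fails when $n=2$, since then $2(n-1)k^{2}=nk^{2}$ and the Killing fields of $\mathbb{S}^{2}_{k}$ are also $\lambda_{0}$-eigenfields (indeed $F(K)=k^{2}$ for any Killing field on $\mathbb{S}^{2}_{k}$). The theorem as stated only asserts that the infimum \emph{is attained} by the fields $a^{\top}$, not that these are the only minimizers, so this does not affect the statement you are proving; but the uniqueness claim should be restricted to $n\geq3$.
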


These two theorems above are proved, respectively, in Theorem 3, Section 3 and
Theorem 1, Section 2 of \cite{NR} . We also need the following results:

\begin{lemma}
\label{gradiente}Let $M$ be a compact $n-$dimensional Riemannian manifold,
$n\geq2.$ If $V\in\Xi(M)\backslash\{0\}$ is a gradient vector field$,$ then
\[
F\left(  V\right)  \geq\frac{1}{n-1}\int_{M}\operatorname*{Ric}\nolimits_{M}%
\left(  V,V\right)  dM.
\]

\end{lemma}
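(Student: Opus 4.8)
The plan is to pass from the vector field $V$ to a potential function and apply the Bochner formula. Since $V$ is a gradient vector field, write $V=\nabla f$ with $f\in C^{\infty}(M)$; this potential is defined globally, so no topological assumption on $M$ is required. Then $\nabla V=\operatorname{Hess}f$, so that $\Vert\nabla V\Vert^{2}=|\operatorname{Hess}f|^{2}$ pointwise and $\operatorname{div}V=\Delta f$, where $\Delta=\operatorname{div}\nabla$.

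First I would write down the Bochner identity
\[
\tfrac{1}{2}\Delta|\nabla f|^{2}=|\operatorname{Hess}f|^{2}+\langle\nabla f,\nabla\Delta f\rangle+\operatorname{Ric}\nolimits_{M}(\nabla f,\nabla f)
\]
and integrate it over the closed manifold $M$. The divergence theorem annihilates the left-hand side, and integrating $\langle\nabla f,\nabla\Delta f\rangle$ by parts gives $\int_{M}\langle\nabla f,\nabla\Delta f\rangle\,dM=-\int_{M}(\Delta f)^{2}\,dM$. Hence
\[
\int_{M}|\operatorname{Hess}f|^{2}\,dM=\int_{M}(\Delta f)^{2}\,dM-\int_{M}\operatorname{Ric}\nolimits_{M}(\nabla f,\nabla f)\,dM .
\]
Next I would bound $(\Delta f)^{2}$ pointwise: since $\Delta f=\operatorname{tr}\operatorname{Hess}f$, the Cauchy--Schwarz inequality (equivalently, splitting $\operatorname{Hess}f$ into its trace and trace-free parts) gives $(\Delta f)^{2}\leq n\,|\operatorname{Hess}f|^{2}$, hence $\int_{M}(\Delta f)^{2}\,dM\leq n\int_{M}|\operatorname{Hess}f|^{2}\,dM$. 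Substituting this into the previous identity and rearranging yields
\[
(n-1)\int_{M}|\operatorname{Hess}f|^{2}\,dM\geq\int_{M}\operatorname{Ric}\nolimits_{M}(\nabla f,\nabla f)\,dM ,
\]
that is, $\int_{M}\Vert\nabla V\Vert^{2}\,dM\geq\frac{1}{n-1}\int_{M}\operatorname{Ric}\nolimits_{M}(V,V)\,dM$. Dividing by $\int_{M}\Vert V\Vert^{2}\,dM$ (which, since $F$ is invariant under scaling of $V$, may be normalized to $1$) gives the asserted lower bound for $F(V)$.

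I do not expect a genuine obstacle here: the argument is standard once the Bochner formula is in hand. The points requiring care are the sign bookkeeping in the integration by parts (which is dictated by the convention $\Delta=\operatorname{div}\nabla$), checking that the trace inequality $(\Delta f)^{2}\leq n|\operatorname{Hess}f|^{2}$ is applied in the direction that produces a \emph{lower} bound on $\int_{M}\Vert\nabla V\Vert^{2}dM$, and recording the (harmless) normalization $\int_{M}\Vert V\Vert^{2}dM=1$.
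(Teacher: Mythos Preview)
Your proposal is correct and follows essentially the same route as the paper: write $V=\operatorname{grad}h$, apply Bochner's formula in integrated form to obtain $\int_{M}(\Delta h)^{2}=\int_{M}\operatorname{Ric}_{M}(V,V)+\int_{M}|\operatorname{Hess}h|^{2}$, and then use the pointwise trace inequality $(\Delta h)^{2}\le n|\operatorname{Hess}h|^{2}$ together with $|\operatorname{Hess}h|=\Vert\nabla V\Vert$. The only difference is cosmetic: you spell out the pointwise Bochner identity and the integration by parts that produce the integrated form, whereas the paper quotes the integrated identity directly.
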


\begin{proof}
Assume that $V=\operatorname{grad}h,$ $h\in C^{\infty}\left(  M\right)
\backslash\{0\}.$ From Bochner's formula,%
\begin{align*}
\int_{M}\left(  \Delta h\right)  ^{2}dM  &  =\int_{M}\operatorname*{Ric}%
\nolimits_{M}\left(  \operatorname*{grad}h,\operatorname*{grad}h\right)
dM+\int_{M}\left\vert \operatorname*{Hess}\left(  h\right)  \right\vert
^{2}dM\\
&  =\int_{M}\operatorname*{Ric}\nolimits_{M}\left(  V,V\right)  dM+\int
_{M}\left\vert \operatorname*{Hess}\left(  h\right)  \right\vert ^{2}dM.
\end{align*}
The proof then follows by using the inequality $\left(  \Delta h\right)
^{2}\leq n\left\vert \operatorname*{Hess}\left(  h\right)  \right\vert ^{2}$
and observing that $\left\vert \operatorname*{Hess}\left(  h\right)
\right\vert =\left\Vert \nabla V\right\Vert .$
\end{proof}

\begin{lemma}
\label{operator}Let $M$ be a compact $n-$dimensional Riemannian manifold,
$n\geq2,$ and $G\subset\operatorname*{Iso}(M)$ a compact Lie subgroup of the
isometry group of $M$ acting with cohomogeneity $1$ on $M.$ If $V\in\Xi(M)$ is
a $G$-invariant vector field orthogonal to the orbits of $G$ then $V$ is the
gradient of a $C^{\infty}$ function in $M$ .
\end{lemma}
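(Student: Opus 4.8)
The plan is to produce the function explicitly by integrating $V$ along a geodesic transverse to the orbits, and then to check that the resulting function is smooth and has gradient $V$. Since $G$ acts with cohomogeneity $1$ on the compact manifold $M$, the orbit space $M/G$ is a one-dimensional manifold with boundary, i.e.\ an interval $[0,L]$ (possibly with one or both endpoints corresponding to singular orbits), and there is a unit-speed geodesic $\gamma:[0,L]\to M$ meeting every principal orbit orthogonally and parametrizing the orbit space. First I would set $h(\gamma(t)):=\int_{0}^{t}\langle V(\gamma(s)),\gamma'(s)\rangle\,ds$ along this geodesic, and then extend $h$ to all of $M$ by declaring it to be constant on each orbit; this is well defined precisely because $V$ is $G$-invariant, so $\langle V,\gamma'\rangle$ is constant on each orbit and the same value of $h$ is forced no matter which geodesic representative we use.

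The second step is to verify $\operatorname{grad}h=V$. On the open dense set $M_{0}$ of principal points, decompose the tangent space at a point $p=\gamma(t)$ as $T_{p}M=\mathbb{R}\gamma'(t)\oplus T_{p}(Gp)$. For the tangential-to-geodesic direction, $dh(\gamma'(t))=\langle V(\gamma(t)),\gamma'(t)\rangle$ by the fundamental theorem of calculus; for directions tangent to the orbit, $h$ is constant along the orbit so $dh$ vanishes there, while $V$ is orthogonal to the orbit by hypothesis, so $\langle V,\cdot\rangle$ also vanishes on $T_{p}(Gp)$. Hence $dh=\langle V,\cdot\rangle$ on $M_{0}$, i.e.\ $\operatorname{grad}h=V$ there. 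The last step is a regularity argument: $V$ is $C^{\infty}$ on all of $M$ by hypothesis, and $h$ is continuous on $M$ and smooth on the dense open set $M_{0}$ with $\operatorname{grad}h=V$; since $M\setminus M_{0}$ (the union of the singular orbits) has measure zero and $h$ extends continuously across it, a standard argument (e.g.\ writing $h(q)-h(q_{0})=\int$ of $\langle V,\dot c\rangle$ along any path $c$, which makes sense because $V$ is globally smooth) shows $h$ is in fact $C^{\infty}$ on $M$ with $\operatorname{grad}h=V$ everywhere.

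The main obstacle I anticipate is the smoothness of $h$ at the singular orbits: there the orbit-space coordinate $t$ is not a smooth function on $M$ (it behaves like a distance function to a lower-dimensional stratum), so one cannot simply assert smoothness of $h$ from its formula along $\gamma$. The clean way around this is to avoid the coordinate $t$ altogether and instead define $h$ by path integration of the globally smooth $1$-form $\omega(\cdot):=\langle V,\cdot\rangle$: one checks $\omega$ is closed by showing $d\omega=0$ on the dense set $M_{0}$ (using $\operatorname{grad}h=V$ there, or directly from $G$-invariance and orthogonality, which force the tangential derivatives of the normal component to match), hence $d\omega=0$ on $M$ by continuity; then if $M$ is simply connected $\omega$ is exact and $h:=\int\omega$ is automatically $C^{\infty}$ with $\operatorname{grad}h=V$. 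If $M$ is not assumed simply connected, one instead notes that the period of $\omega$ over the single generating loop of the orbit space is $\int_{0}^{L}\langle V(\gamma(s)),\gamma'(s)\rangle\,ds$ and argues this vanishes (or works on the orbit space directly, where the cohomogeneity-one structure makes $M/G=[0,L]$ contractible), so no period obstruction arises and $h$ is globally well defined and smooth.
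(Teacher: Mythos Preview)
Your plan coincides with the paper's argument: fix a unit-speed geodesic orthogonal to the principal orbits (equivalently, the signed distance $d$ to a fixed principal orbit), set $h=\int\langle V,\gamma'\rangle$ along it, extend $h$ to all of $M$ by $G$-invariance, and verify $\operatorname{grad}h=V$ on the regular part $M_{0}$; the paper then concludes by observing that $h$ is $C^{1}$ on $M$ with $C^{\infty}$ gradient $V$, hence $C^{\infty}$. Your proposed variant for the regularity step---show that the globally smooth $1$-form $\omega=V^{\flat}$ is closed on the dense principal set, hence on $M$, and then exact because it is basic over a contractible orbit space---is a clean reformulation that reaches the same endpoint and sidesteps the need to analyse the behaviour of the distance function at the singular strata.

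One point to tighten: your opening assertion that $M/G\cong[0,L]$ is not automatic. For a compact cohomogeneity-one manifold the orbit space can also be a circle (all orbits principal), and in that case the lemma as stated actually fails: take $M=S^{1}\times S^{n-1}$ with $G=O(n)$ acting on the second factor and $V=\partial_{\theta}$, which is $G$-invariant, orthogonal to the orbits, and not a gradient. Your later claim that the period $\int_{0}^{L}\langle V,\gamma'\rangle\,ds$ ``vanishes'' is unjustified precisely in this situation. The paper's proof makes the same tacit assumption (it relies on $M\setminus M^{\ast}\neq\emptyset$ and on a globally single-valued signed distance $d$), so this is not a defect of your approach relative to the paper's; but it is worth noting that the lemma, read literally, needs the interval case, which in the context of Theorem~\ref{main} is guaranteed once $\operatorname{Ric}_{M}\geq(n-1)k^{2}>0$ (Bonnet--Myers forces $\pi_{1}(M)$ finite, ruling out $M/G\cong S^{1}$).
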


\begin{proof}
From the Theorem, Section 2 of \cite{HL}, there is an open dense submanifold
$M^{\ast}$ of $M$ such that any orbit of $G$ through a point of $M^{\ast}$ is
contained in $M^{\ast}$, has dimension $n-1$ and, hence, divides $M$ into two
connected components. Let $O\subset M^{\ast}$ be an orbit of $G$ in $M^{\ast}$
and let $d:M^{\ast}\rightarrow\mathbb{R}$ be the oriented distance to $O.$
Since $\operatorname{grad}d$ and $V$ are $G-$invariant the function
$f(p):=\left\langle V\left(  p\right)  ,\operatorname{grad}d\left(  p\right)
\right\rangle ,$ $p\in M^{\ast},$ is $G-$invariant. Moreover, since $V$ is
orthogonal to the orbits of $G$ we have $V=f\operatorname{grad}d$ at $M^{\ast
}.$ Since $V$ is $C^{\infty}$ in $M$ but $\operatorname{grad}d$ is not defined
at $M\backslash M^{\ast}$ the vector field $V$ must vanish at $M\backslash
M^{\ast}.$ It follows that $f$ extends continuously to $M$ as $0$ at
$M\backslash\partial M.$ Setting%
\[
a=\min_{M}d,\text{ \ }b=\max_{M}d,
\]
since $f$ is $G-$invariant, we may define a real function $\Phi\in
C^{0}\left(  \left[  a,b\right]  \right)  $ by%
\[
\Phi\left(  t\right)  =s\Leftrightarrow f(p)=s,\text{ }t\in\left[  a,b\right]
,
\]
where $p\in M$ is such that $t=d(p,O).$ Defining $\phi\in C^{1}\left(  \left[
a,b\right]  \right)  $ by%
\begin{equation}
\phi(t)=\int_{a}^{t}\Phi(s)ds,\text{ }t\in\left[  a,b\right]  , \label{H}%
\end{equation}
and $h\in C^{\infty}\left(  M^{\ast}\right)  \cap C^{1}\left(  M\right)  $ by
$h=\phi\circ d$ we have%
\begin{align*}
\operatorname{grad}h  &  =\left(  \phi^{\prime}\circ d\right)
\operatorname{grad}d=\left(  \Phi\circ d\right)  \operatorname{grad}%
d=f\operatorname{grad}d\\
&  =\left\langle V\left(  p\right)  ,\operatorname{grad}d\left(  p\right)
\right\rangle \operatorname{grad}d=V
\end{align*}
in $M.$ This proves that $V$ is the gradient of a the $C^{1}$ function $h$ in
$M.$ Since $V$ is $C^{\infty}$ it follows that $h\in C^{\infty}\left(
M\right)  .$ This concludes with the proof of the lemma.
\end{proof}

\begin{proposition}
\label{corolario}Let $M$ be a compact $n-$dimensional Riemannian manifold,
$n\geq2,$ and $G\subset\operatorname*{Iso}(M)$ a compact Lie subgroup of the
isometry group of $M$ acting with cohomogeneity $1$ on $M.$ Suppose that any
isotropy subgroup of $G$ is non trivial and acts with no fixed points on the
tangent spaces of $M$, except at the null vectors. Then, if $V$ is a $G-$
invariant vector field, $V$ is orthogonal to the orbits of $G$.
\end{proposition}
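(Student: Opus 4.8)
The goal is: given the cohomogeneity-one action with all isotropy groups nontrivial and acting without nonzero fixed vectors on tangent spaces, any $G$-invariant vector field $V$ must be everywhere tangent to the normal directions of the orbits, i.e. orthogonal to the orbits. The natural strategy is to work pointwise. Fix $p \in M$ and let $G_p \subset G$ be the isotropy subgroup at $p$. Since $V$ is $G$-invariant, for every $g \in G_p$ we have $dg_p(V(p)) = V(gp) = V(p)$, so $V(p)$ is a fixed vector of the isotropy representation of $G_p$ on $T_pM$.

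The key step is to split $T_pM = T_p(Gp) \oplus \nu_p$, the tangent and normal spaces to the orbit through $p$, both of which are $G_p$-invariant subspaces (the isotropy representation preserves this decomposition since $G$ acts by isometries). Write $V(p) = V^\top + V^\perp$ accordingly; both components are $G_p$-fixed. Now I would invoke the hypothesis: the isotropy representation has no nonzero fixed vectors on $T_pM$ — wait, that is not literally what is assumed; what is assumed is ``acts with no fixed points on the tangent spaces except at null vectors,'' which I read as: the isotropy representation restricted to the tangent space of the orbit (the isotropy-relevant part) has no nonzero fixed vectors. More precisely, on the tangent space $T_p(Gp)$ to the orbit the only $G_p$-fixed vector is $0$, forcing $V^\top = 0$, hence $V(p) = V^\perp \in \nu_p$, which is exactly orthogonality to the orbit. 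For principal orbits $\nu_p$ is one-dimensional, consistent with Lemma \ref{operator}.

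The main obstacle is handling the points of non-principal orbits, i.e.\ where $M^\ast$ fails: there the orbit has lower dimension and the decomposition still makes sense, but one must check the argument degrades gracefully — on a singular orbit the normal space is larger, but $V(p)$ being $G_p$-fixed and the no-fixed-vector hypothesis on the orbit-tangent part still forces $V(p) \perp T_p(Gp)$. Since $M \setminus M^\ast$ has measure zero and $V$ is continuous, and we have shown $V \perp$ orbits on the dense set $M^\ast$, continuity actually suffices to conclude on all of $M$ once the principal-orbit case is done; so the cleanest route is to prove orthogonality on $M^\ast$ by the isotropy-representation argument above and then extend by density and continuity of $V$. The one genuine subtlety to get right is the precise reading of ``acts with no fixed points on the tangent spaces except at null vectors'' and verifying it delivers exactly $V^\top = 0$; I would state this interpretation explicitly at the start of the proof.
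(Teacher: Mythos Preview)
Your proposal is correct and follows essentially the same route as the paper: fix $p$, use $G$-invariance of $V$ to see that $V(p)$ is fixed by the isotropy representation of $G_p$, split $T_pM$ into the $G_p$-invariant pieces $T_p(Gp)\oplus\nu_p$, and invoke the hypothesis to force the tangential component to vanish. Your reading of the hypothesis (no nonzero $G_p$-fixed vectors in $T_p(Gp)$, rather than in all of $T_pM$) is precisely the one the paper's proof relies on in its final step, and your density-plus-continuity remark for non-principal orbits cleanly covers the points where the paper's use of a single normal field $N$ and the claim that $dg_p$ acts trivially on $(T_pG(p))^{\perp}$ is only literally valid on principal orbits.
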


\begin{proof}
Let $V$ $\in\Xi\left(  M\right)  $ be $G-$invariant$.$ Given $p\in M,$ if%
\[
G(p):=\left\{  g(p)\text{
$\vert$
}g\in G\right\}  =\left\{  p\right\}
\]
then there is nothing to prove. If $G(p)\neq\left\{  p\right\}  $ then, since
\[
G_{p}:=\left\{  g\in G\text{
$\vert$
}g(p)=p\right\}  \neq\left\{  \operatorname*{Id}\right\}
\]
by hypothesis, we may take $g\in G_{p},$ $g\neq\operatorname*{Id}.$ Let $N$ be
any non zero vector field orthogonal to $G(p)$ in a neighborhood of $p$. Set
$V_{N}=\left\langle V,N\right\rangle N$ and $V_{T}=V-V_{N}.$ We have
$dg_{p}V_{N}\left(  p\right)  =V_{N}(p)$ since $dg_{p}$ acts trivially on
$\left(  T_{p}G(p)\right)  ^{\bot}$. We then obtain%
\begin{align*}
V\left(  g\left(  p\right)  \right)   &  =dg_{p}V\left(  p\right)
=dg_{p}V_{T}\left(  p\right)  +dg_{p}V_{N}\left(  p\right) \\
&  =dg_{p}V_{T}\left(  p\right)  +V_{N}\left(  p\right)  .
\end{align*}

Since
\[
V(g(p))=V(p)=V_{T}(p)+V_{N}(p)
\]
we obtain
\begin{equation}
dg_{p}V_{T}\left(  p\right)  =V_{T}\left(  p\right)  . \label{fix}%
\end{equation}
Since (\ref{fix}) holds for any $g\in G_{p}$ and $G_{p}$ has non nonzero fixed
vectors it follows that $V_{T}=0$ and $V$ is orthogonal to the orbits of $G.$
\end{proof}

\begin{proposition}
\label{laplaquad}Let $M$ be a compact $n-$dimensional Riemannian manifold,
$n\geq2,$ $G\subset\operatorname*{Iso}(M)$ a compact Lie subgroup of the
isometry group of $M$ acting with cohomogeneity $1$ on $M.$ Let $N$ be a
unitary vector field orthogonal to the principal orbits of $G.$ Assume that
$h:M\rightarrow\mathbb{R}$ is a $C^{\infty}$ and $G-$invariant function. Then
$f:=\left\langle \operatorname{grad}h,N\right\rangle \in C^{\infty}\left(
M^{\ast}\right)  $ and it holds, in $M^{\ast},$ the formula%
\begin{equation}
\left(  \Delta h\right)  ^{2}=\left(  N\left(  f\right)  \right)
^{2}-2\left(  n-1\right)  fHN\left(  f\right)  +\left(  n-1\right)  ^{2}%
f^{2}H^{2}, \label{dh}%
\end{equation}
where $M^{\ast}$ is as in Lemma \ref{operator} and $H,$ $\left\vert
B\right\vert :M^{\ast}\rightarrow\mathbb{R}$, at given $x\in M^{\ast},$ are
the mean curvature and the norm of the second fundamental form of the orbit
$G\left(  x\right)  $ with respect to $N$.
\end{proposition}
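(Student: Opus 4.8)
The plan is to express $\Delta h$ on $M^{\ast}$ in terms of $f$ and the extrinsic geometry of the orbits, and then square.

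\emph{Step 1 (reduction to a normal gradient).} Since $h$ is $G$-invariant it is constant along each orbit, so $dh_{p}$ annihilates $T_{p}G(p)$ for every $p\in M$; hence $\operatorname{grad}h$ is orthogonal to all orbits of $G$. On $M^{\ast}$ the orbits are principal, of dimension $n-1$, so $\left(T_{p}G(p)\right)^{\bot}=\mathbb{R}\,N(p)$ and therefore $\operatorname{grad}h=f\,N$ on $M^{\ast}$, where $f=\langle\operatorname{grad}h,N\rangle$. As $\operatorname{grad}h$ and $N$ are $C^{\infty}$ on $M^{\ast}$, so is $f$.

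\emph{Step 2 (the divergence of $N$).} I would then compute, on $M^{\ast}$, $\Delta h=\operatorname{div}(\operatorname{grad}h)=\operatorname{div}(fN)=N(f)+f\,\operatorname{div}N$ by the product rule for the divergence. To identify $\operatorname{div}N$, fix $x\in M^{\ast}$ and choose an orthonormal basis $e_{1},\dots,e_{n-1}$ of $T_{x}G(x)$, completed to an orthonormal basis of $T_{x}M$ by $e_{n}=N$. Since $\left\Vert N\right\Vert \equiv1$ one has $\langle\nabla_{e_{i}}N,N\rangle=0$ for all $i$, so each $\nabla_{e_{i}}N$ is tangent to the orbit and $\langle\nabla_{e_{i}}N,e_{i}\rangle=-\langle S_{N}(e_{i}),e_{i}\rangle$, where $S_{N}(Y)=-\nabla_{Y}N$ is the shape operator of $G(x)$ with respect to $N$. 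Summing over $i=1,\dots,n-1$ and using $\langle\nabla_{N}N,N\rangle=0$ gives $\operatorname{div}N=-\operatorname{tr}S_{N}=-(n-1)H$, with $H$ the mean curvature of the orbit with respect to $N$.

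\emph{Step 3 (conclusion).} Combining Steps 1 and 2, $\Delta h=N(f)-(n-1)\,fH$ on $M^{\ast}$, and squaring this identity yields exactly (\ref{dh}).

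The computation is entirely local and elementary; the only point that needs care is that $N$, $H$ (and $\left\vert B\right\vert $) are genuinely well defined and $C^{\infty}$ on all of $M^{\ast}$, for which one uses the structure of cohomogeneity-one actions given by the Theorem of Section 2 of \cite{HL} already invoked in Lemma \ref{operator}, together with fixing the sign convention for $H$ (namely $S_{N}(Y)=-\nabla_{Y}N$) so that the cross term in (\ref{dh}) comes out with a minus sign. There is no substantial obstacle beyond this bookkeeping.
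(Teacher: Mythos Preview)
Your proof is correct and follows essentially the same route as the paper: both reduce to $\operatorname{grad}h=fN$ on $M^{\ast}$, compute $\Delta h=N(f)-(n-1)fH$ by expanding the divergence in a frame adapted to the orbit (you phrase this via the product rule and $\operatorname{div}N=-(n-1)H$, the paper writes it out directly), and then square. Your treatment is in fact slightly more explicit about the sign convention for $H$ and about why $\operatorname{grad}h$ is normal to the orbits.
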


\begin{proof}
Let $x\in M^{\ast}$ and $p\in G\left(  x\right)  $ be given$.$ Let
$\{E_{1},E_{2},...,E_{n-1}\}$ be an orthonormal frame of $TG(x)$ in a
neighborhood of $p$. Then%
\begin{align*}
\Delta h  &  =\operatorname{div}\operatorname{grad}h\\
&  =\left\langle \nabla_{N}\operatorname{grad}h,N\right\rangle +\overset
{n-1}{\underset{i=1}{%
{\displaystyle\sum}
}}\left\langle \nabla_{E_{1}}\operatorname{grad}h,E_{i}\right\rangle \\
&  =\left\langle \nabla_{N}N\left(  h\right)  N,N\right\rangle +\overset
{n-1}{\underset{i=1}{%
{\displaystyle\sum}
}}\left\langle \nabla_{E_{1}}N\left(  h\right)  N,E_{i}\right\rangle \\
&  =N\left(  N\left(  h\right)  \right)  +N(h)\overset{n-1}{\underset{i=1}{%
{\displaystyle\sum}
}}\left\langle \nabla_{E_{1}}N,E_{i}\right\rangle \\
&  =N(f)-(n-1)fH
\end{align*}
which gives (\ref{dh}).
\end{proof}

\section{Proof of Theorem \ref{main}}

\qquad It is easy to see that we may assume, with no loss of generality, that
$k=1$. Let $V\in\Xi\left(  M\right)  \backslash\left\{  0\right\}  $ be a
$G-$invariant vector field$.$ By the hypothesis of the theorem and from
Proposition \ref{corolario}, it follows that $V$ is orthogonal to the orbits
of $G.$ Therefore, by Lemma \ref{operator}, $V$ is a gradient vector field and
hence, by Lemma \ref{gradiente},
\begin{equation}
F\left(  V\right)  \geq\frac{1}{n-1}\int_{M}\operatorname*{Ric}\nolimits_{M}%
\left(  V,V\right)  dM\geq1, \label{eq}%
\end{equation}
proving the first part of the theorem.

Assume now that $F(V)=1$ for some $G-$invariant vector field $V\in
\Xi(M)\backslash\left\{  0\right\}  $ and let $h\in C^{\infty}\left(
M\right)  $ such that $V=\operatorname{grad}h.$

Assuming that
\[
\int_{M}\left\Vert V\right\Vert ^{2}=1
\]
we have, by Bochner's formula,%
\[
\int_{M}\left(  \Delta h\right)  ^{2}dM=\int_{M}\operatorname*{Ric}%
\nolimits_{M}\left(  \operatorname*{grad}h,\operatorname*{grad}h\right)
dM+\int_{M}\left\vert \operatorname*{Hess}\left(  h\right)  \right\vert
^{2}dM.
\]

We then obtain%
\[
\int_{M}n\left\vert \operatorname*{Hess}\left(  h\right)  \right\vert
^{2}dM\geq\int_{M}\operatorname*{Ric}\nolimits_{M}\left(  \operatorname*{grad}%
h,\operatorname*{grad}h\right)  dM+\int_{M}\left\vert \operatorname*{Hess}%
\left(  h\right)  \right\vert ^{2}dM
\]
or%
\[
\left(  n-1\right)  \int_{M}\left\vert \operatorname*{Hess}\left(  h\right)
\right\vert ^{2}dM\geq\int_{M}\left\Vert \operatorname{grad}h\right\Vert
^{2}\operatorname*{Ric}\nolimits_{M}\left(  \frac{\operatorname*{grad}%
h}{\left\Vert \operatorname{grad}h\right\Vert },\frac{\operatorname*{grad}%
h}{\left\Vert \operatorname{grad}h\right\Vert }\right)  dM\geq\left(
n-1\right)  .
\]

Hence,%
\begin{equation}
\int_{M}\left\vert \operatorname*{Hess}\left(  h\right)  \right\vert
^{2}dM\geq1. \label{eg}%
\end{equation}

Since $V=\operatorname{grad}h,$ $\left\vert \operatorname*{Hess}\left(
h\right)  \right\vert =\left\Vert \nabla V\right\Vert ,$ it follows from
(\ref{eq}) that the equality $F\left(  V\right)  =1$ occurs if and only if
\begin{equation}
\left(  \Delta h\right)  ^{2}=n\left\vert \operatorname*{Hess}\left(
h\right)  \right\vert ^{2}, \label{laplacian}%
\end{equation}
and%
\[
\operatorname*{Ric}\nolimits_{M}\left(  \frac{\operatorname*{grad}%
h}{\left\Vert \operatorname{grad}h\right\Vert },\frac{\operatorname*{grad}%
h}{\left\Vert \operatorname{grad}h\right\Vert }\right)  =n-1.
\]

Putting $f=\left\langle \operatorname{grad}h,N\right\rangle $ we have $V=fN$
and, by Proposition \ref{laplaquad}, the following equation holds in $M^{\ast
}$%
\[
\left(  \Delta h\right)  ^{2}=\left(  N\left(  f\right)  \right)
^{2}-2\left(  n-1\right)  HfN\left(  f\right)  +\left(  n-1\right)  ^{2}%
f^{2}H^{2}.
\]

Noting that
\begin{align*}
\left\vert \operatorname*{Hess}\left(  h\right)  \right\vert ^{2} &  =\left(
N\left(  N\left(  h\right)  \right)  \right)  ^{2}+\left(  N\left(  h\right)
\right)  ^{2}\left\vert B\right\vert ^{2}\\
&  =\left(  N\left(  f\right)  \right)  ^{2}+f^{2}\left\vert B\right\vert
^{2}.
\end{align*}

we obtain that (\ref{laplacian}) is equivalent to%
\begin{equation}
\left(  N\left(  f\right)  +Hf\right)  ^{2}=f^{2}\left[  -\frac{n\left\vert
B\right\vert ^{2}}{n-1}+nH^{2}\right]  \label{f}%
\end{equation}
which implies that%
\begin{equation}
-\frac{\left\vert B\right\vert ^{2}}{n-1}+H^{2}\geq0.\label{B}%
\end{equation}

But
\[
\left[  \left(  n-1\right)  H\right]  ^{2}\leq\left(  n-1\right)  \left\vert
B\right\vert ^{2},
\]
and therefore
\begin{equation}
-\frac{\left\vert B\right\vert ^{2}}{n-1}+H^{2}=0.\label{z}%
\end{equation}
From (\ref{f}),
\begin{equation}
N\left(  f\right)  +Hf=0.\label{phi}%
\end{equation}

Since $V$ is a critical point of $F$ with eigenvalue $1$, we have
$\operatorname*{div}\nabla\left(  fN\right)  =fN.$ A calculation gives%
\[
\operatorname*{div}\nabla\left(  fN\right)  =(N(N(f))-\left(  n-1\right)
HN\left(  f\right)  -\left\vert B\right\vert ^{2}f)N
\]
so that%
\begin{equation}
N(N(f))-\left(  n-1\right)  HN\left(  f\right)  -\left\vert B\right\vert
^{2}f=-f. \label{p}%
\end{equation}

Using (\ref{phi}) we obtain%
\[
N(N(f))-\left(  n-1\right)  HN\left(  f\right)  -\left\vert B\right\vert
^{2}f=N\left(  N\left(  f\right)  \right)  +(n-1)\left[  H^{2}-\left\vert
B\right\vert ^{2}/\left(  n-1\right)  \right]  f=-f
\]
and, using (\ref{z}),
\[
N\left(  N\left(  f\right)  \right)  =-f.
\]
\ 

Now, note that
\begin{align*}
\Delta\left(  N\left(  f\right)  \right)   &  =\\
&  =\left(  n-1\right)  N\left(  N\left(  f\right)  \right)  H-N\left(
N\left(  N\left(  f\right)  \right)  \right)  =-\left(  n-1\right)
fH+N\left(  f\right) \\
&  =+\left(  n-1\right)  N\left(  f\right)  +N\left(  f\right)  =nN\left(
f\right)  .
\end{align*}

It follows that $N\left(  f\right)  $ is an eigenfunction for the usual
Laplacian in $M$ with eigenvalue $n$. As, by hypothesis, $\operatorname*{Ric}%
\nolimits_{M}\geq\left(  n-1\right)  ,$ it follows from a classic result of
Obata, that $M$ is isometric to the unit sphere $\mathbb{S}^{n}$ (\cite{O})$.$

From Theorem \ref{inf} the converse also follows, that is, if $M$ is a sphere
of radius $1$ then $F(V)=1.$

This concludes with the proof of the theorem.

\newpage


\begin{thebibliography}{9}                                                                                                %


\bibitem {C}H. I. Choi: \emph{\textquotedblleft Asymptotic Dirichlet problems
for harmonic functions on Riemannian manifolds\textquotedblright}. Trans. Am.
Math. Soc., \textbf{281 (2)}: 691--716, 1984.

\bibitem {HL}W. Y. Hsiang, B. Lawson:\textit{ }\emph{\textquotedblleft Minimal
submanifolds of low cohomogeneity}\textquotedblright, Journal of Differential
Geometry, Vol 5, N. 1-2 , 1971, 1-38.

\bibitem {NR}G. Nunes, J. Ripoll:"\emph{On the critical points of the energy
functional on vectors fields of a Riemannian manifold". }Annals of Global
Analysis and Geometry, Vol 55, p. 299-308, 2019.

\bibitem {O}M. Obata: \textquotedblleft\emph{Certain Condition for a
Riemannian manifold to be isometric with a sphere\textquotedblright}, Math.
Soc. Japan 14 (1962), 333-340.
\end{thebibliography}
\end{document}